\mathchardef\colon="303A  
\def\ie{{\slshape i.e.}\xspace}
\edef\cdrestoreat{
\noexpand\catcode\lq\noexpand\@=\the\catcode\lq\@}\catcode\lq\@=11
\newtheoremstyle{theo}{\topsep}{\topsep}
{\slshape}{}{\bf}{{\normalfont.}}{.5em}{}
\newtheoremstyle{deftn}{\topsep}{\topsep}
{\normalfont}{}{\bf}{{\normalfont.}}{.5em}{}
\def\namefont#1{{\bf #1}}
\theoremstyle{theo}
\newtheorem{thm}{\namefont{Theorem}}[section]
\newtheorem{lem}[thm]{\namefont{Lemma}}
\newtheorem{prop}[thm]{\namefont{Proposition}}
\newtheorem{cor}[thm]{\namefont{Corollary}}
\theoremstyle{deftn}
\newtheorem{rem}[thm]{\namefont{Remark}}
\def\dfn#1{{\bfseries\itshape #1\/}}
\def\opr#1{\mathord{{\operator@font{#1}}}}
\DeclareFontFamily{OT1}{pzc}{}
\DeclareFontShape{OT1}{pzc}{m}{it}{<->s*[1.14]pzcmi7t}{}
\DeclareMathAlphabet{\mathpzc}{OT1}{pzc}{m}{it}
\def\cto#1{\ensuremath{\mathrm{#1}}}
\def\ct#1{\ensuremath{\mathpzc{#1}}}
\def\id#1{\ensuremath{\opr{id}_{#1}}}
\def\reg{_{\mathrm{reg}}}
\def\ex{_{\mathrm{ex}}}
\def\tz{\ensuremath{\mathrm{T}\kern-.3ex_0}\xspace}
\def\full{\textrm{\footnotesize full}}
\let\fct\longrightarrow
\let\pfct\rightharpoonup
\let\fnt\longrightarrow
\def\fsp#1#2{\ensuremath{#2^{#1}}}
\def\N{\ensuremath{\mathbb{N}}\xspace}
\def\pt#1{\ensuremath{S_{#1}}}
\def\open#1{\ensuremath{\tau_{#1}}}
\def\eql#1{\ensuremath{\mathcal{#1}}}
\def\es#1{\ensuremath{\opr{ES}({#1})}}
\def\grph{\bullet\begin{array}{@{}c@{}}\to\\[-1.5ex]\to
\end{array}\bullet}
\def\pb#1#2#3{\ensuremath{{#1}\times_{#2}{#3}}}
\def\cp#1{\ensuremath{<\kern-.4ex<#1>\kern-.45ex>}}
\def\pas#1#2{\ensuremath{\hphantom{#1}\begin{array}{@{}c@{}}#2\\[-.5ex]
\makebox[0pt][r]{$#1$}\downarrow\ \\[-.5ex]\N\end{array}}}
\def\pad#1#2{\ensuremath{\begin{array}{@{}c@{}}#2\\[-.5ex]
\ \downarrow\makebox[0pt][l]{$#1$}\\[-.5ex]\N\end{array}\hphantom{#1}}}
\def\pasl#1#2{\ensuremath{#1:#2\fct\N}}
\def\rst{\kern-.7ex\restriction}
\def\gd#1{\ensuremath{\mathds{#1}}}
\def\grd{\mathop{\cto{Grpd}}}
\def\wwedge{{\mathbin{\mathord{\wedge}\kern-.9ex\mathord{\wedge}}}}
\date{}
\begin{document}
\title{The category of equilogical spaces and the effective topos as
homotopical quotients}
\author{Giuseppe Rosolini\thanks{DIMA, via Dodecaneso 35, 16146
Genova, Italy, \texttt{rosolini@unige.it}.\newline
Projects MIUR-PRIN 2010-2011 and
Correctness by Construction (EU 7th framework programme, grant
no.~PIRSES-GA-2013-612638) provided support for the research
presented in the paper.}}
\maketitle

\begin{abstract}
We show that the two models of an extensional version of Martin-L\"of
type theory, those given by the category of equilogical spaces and by
the effective topos, are homotopical quotients of appropriate
categories of 2-groupoids.
\end{abstract}

\section{Introduction}

The category of \tz-spaces embeds fully in the category of equilogical
spaces; the category of equilogical spaces is locally cartesian closed
and the embedding functor preserves products and any
exponential available in the original category. Thus the category of
equilogical spaces provides a nice extension of the category of
\tz-spaces. The effective topos is
the categorical rendering of Kleene's realizability model for
intuitionistic logic, and is the first interesting example of a
non-Grothendieck topos.
We show that the category of equilogical spaces is the homotopical
quotient of a category of groupoids, and that the effective topos is
the homotopical quotient of a category of 2-groupoids of partitioned
assemblies.

Groupoids are a main tool in algebraic topology, see
\cite{BrownR:eleomt} and groupoids were the firstnontrivial models of
the intensional version of Martin-L\"of Type Theory in \cite{MR1686862}.
Moreover in recent years the Univalent Foundations Program, see
\cite{hottbook}, has advocated a strong connection between algebraic
topology and type theory.

Since both the category of equilogical spaces and the effective topos
are models of an extensional version of Martin-L\"of type theory, it
is useful to find that each comes from the ``extensionalization'' of a
model of intensional type theory and that such a process is actually a
homotopical quotient. We should stop here to point out that the
meaning we adopt for an homotopical quotient of a category is in line
with a suggestion in \cite{CarboniA:regec} and is the more
naive notion obtained from an interval-like object than that derived
from a Quillen model category---the main reason is that one of the two
example categories we study is neither complete nor cocomplete. So,
as a homotopical quotient, we shall consider a category obtained as a
quotient category from a category \ct{C} with finite limits, as follows:
\begin{itemize}
\item there is a fixed \dfn{interval-like} object $I$, \ie it has two
global points $0:T\fct I$ and $1:T\fct I$ whose pushout
$$\xymatrix{T\ar[r]^{1}\ar[d]_{0}&I\ar[d]^{0'}\\
I\ar[r]_(.4){1'}&I+_{T}I}$$
exists in \ct{C} and is stable under products, an arrow 
$\gamma:I\fct I+_{T}I$ 
and an arrow $\iota:I\fct I$ such that the four
arrows together with the unique arrow $!:I\fct T$ form an
\dfn{equivalence co-span} in \ct{C}, \ie the following diagrams
commute 
$$\begin{array}{cc}
\xymatrix@=5.2em{T&I\ar[l];[]^{0}\ar[r];[]_{1}&T\\
&I\ar[lu];[]_{1}\ar[u];[]_{\iota}\ar[ru];[]^{0}}&
\xymatrix@=2.2em{ T&&I\ar[ll];[]^{0}\ar[rr];[]_{1}&&T\\
&I\ar[lu];[]_{0}&&I\ar[ru];[]^{1}\\
&&
\makebox[1em][c]{$I+_{T}I$}
\ar[lu];[]_(.4){0'}\ar[uu];[]_{\gamma}\ar[ru];[]^(.4){1'}}
\end{array}$$
---note that there is also a necessarily commutative
diagram 
$$\xymatrix@=3em{T&I\ar[l];[]^{0}\ar[r];[]_{1}&T\\
&T\ar[lu];[]_{\id{T}}\ar[u];[]_{!}\ar[ru];[]^{\id{T}}}$$
since $T$ is terminal---;
\item two arrows $f,g:X\fct Y$ are identified in the quotient if there
is an arrow $h:X\times I\fct Y$ such that the following diagram commute
$$\xymatrix@C=4em{X\ar[rd]^f\ar[d]_{<\id{X},0>}&\\
X\times I\ar[r]^h&Y.\\
X\ar[ru]_g\ar[u]^{<\id{X},1>}}
$$
The condition of the structure on $I$ ensures that the
identification is an equivalence relation on parallel arrows in
\ct{C}.
\end{itemize}
It seems plausible that the categories we analyse in the following
sustain suitable notions of fibrations, cofibrations and weak
equivalences---in particular, that a map of the kind
$\xymatrix@1{<\id{X},i>:X\ar[r]&X\times I}$, $i=0,1$, is a weak
equivalence. But the categories are certainly not complete, nor
cocomplete, and that prevents a direct comparison with standard
homotopical quotients. It will be considered in future work.

We introduce the category of equilogical spaces in section \ref{one} 
and we recall one of the presentations of the effective topos in
section \ref{two}, reviewing properties which are
needed in the following sections. In section \ref{three} we
determine a category \ct{A} of topological groupoids and an
interval-like topological groupoid \gd{I} such that the homotopical
quotient of \ct{A} determined by \gd{I} is equivalent to the category
of equilogical spaces. In section \ref{four} we produce a similar
result for the effective topos using a category of 2-groupoids on
partitioned assemblies.

The idea of the paper grew out of work on models for Homotopy Type
Theory during discussions at CMU with Steve
Awodey and the members of the lively HoTT group there. The author
acknowledges how the stimulating environment helped develop
the ideas presented in the paper and warmly thanks all the
participants for their strong support.

The final draft of the paper was prepared following some
interesting remarks made by an anonymous referee; the author
thankfully acknowledges the referee's unconditional contribution.

\section{Equilogical spaces}\label{one}

Recall from \cite{ScottD:newcds,BauerA:equs} that an 
\dfn{equilogical space}
$\eql{E}=(\pt{\eql{E}},\open{\eql{E}},\sim_{\eql{E}})$
consists of
a \tz-space $(\pt{\eql{E}},\open{\eql{E}})$ and
an equivalence relation
$\sim_{\eql{E}}\subseteq\pt{\eql{E}}\times\pt{\eql{E}}$ on the points
of the space.

A \dfn{map $[f]:\eql{E}\fct\eql{F}$ of equilogical spaces} is an
equivalence class of continuous functions
$f:(\pt{\eql{E}},\open{\eql{E}})\fct(\pt{\eql{F}},\open{\eql{F}})$
preserving the equivalence relations, \ie
if $x\sim_{\eql{E}}x'$, then $f(x)\sim_{\eql{F}}f(x')$ for all $x$ and
$x'$ in $\pt{\eql{E}}$. For two such continuous functions
$f,g:(\pt{\eql{E}},\open{\eql{E}})\fct(\pt{\eql{F}},\open{\eql{F}})$, one
sets $f$ \dfn{equivalent to} $g$ when
$f(x)\sim_{\eql{F}}g(x)$ for all $x\in\pt{\eql{E}}$.

\dfn{Composition} of maps of equilogical spaces
$[f]:\eql{E}\fct\eql{F}$ and $[g]:\eql{F}\fct\eql{G}$ is given on (any
of) their continuous representatives: $[g]\circ[f]\colon=[g\circ f]$.

The data above determine a category \ct{Equ} of equilogical
spaces. There is a full embedding
$$\xymatrix@C=5em@R=.5ex{
{Y:\ct{Top}_0}
\ar@{^(->}[]!<3ex,0ex>;[r]_(.72){\full}&
{\ct{Equ}}}$$
which maps a \tz-space $(S,\tau)$ to the equilogical space on
$(S,\tau)$ with the diagonal relation, \ie the equilogical space
$(S,\tau,=_{S})$.

The category \ct{Equ} is a locally cartesian closed full extension of
the category $\ct{Top}_0$
of \tz-spaces. In fact, it is the intersection of two
other locally cartesian closed full extensions of 
$$\def\rfr#1#2{\ar@<1.4ex>@{<-}[#1]\ar@<-1ex>@{^(->}[#1]^(#2){\bot}}
\def\dfr#1#2{\ar@<-1.4ex>@{<-}[#1]\ar@<1ex>@{_(->}[#1]_(#2){\dashv}}
\xymatrix@=3em{
{\ct{Top}_0}\ \dfr{d}{.5}\rfr{r}{.5}&
{\ct{Equ}}_{}\dfr{d}{.5}\rfr{r}{.5}&(\ct{Top}_0)\ex\dfr{d}{.5}\\
{\ct{Top}_{}}\rfr{r}{.5}&
{\ct{Top}}\reg\rfr{r}{.5}&{\ct{Top}\ex}}$$
The exact completions $(\ct{Top}_0)\ex$ and $\ct{Top}\ex$ are
pretoposes, while the regular completion $\ct{Top}\reg$ is a
quasitopos, see \cite{RosoliniG:equsfs}.

The product of equilogical spaces $\eql{E}\times\eql{F}$ is computed
as expected taking the topological product
$(\pt{\eql{E}},\open{\eql{E}})\times(\pt{\eql{F}},\open{\eql{F}})$ and
the equivalence relation
$$<a,b>\sim_{\eql{E}\times\eql{F}}<a',b'>\textrm{ when } 
a\sim_{\eql{E}}a'\textrm{ and }b\sim_{\eql{F}}b'.$$
The projections to the factors are obvious.

The construction of the exponential $\fsp{\eql{E}}{\eql{F}}$ is less
direct and we refer the reader to the basic sources
\cite{ScottD:dattl,ScottD:newcds,BauerA:equs} as well as
\cite{BucaloA:sobies,RosoliniG:typtec}.

It is useful for the purpose of this paper to point out the strong
similarity between the presentation of \ct{Equ} and that of
$(\ct{Top}_0)\ex$. So recall from 
\cite{CarboniA:freecl,CarboniA:somfcr,FreydP:cata,CarboniA:regec}
that the exact completion $\ct{C}\ex$ of a category \ct{C}
with finite limits is a quotient category of the full subcategory
$\es{\ct{C}}$ of the category $\ct{C}^{\grph}$ of graphs in \ct{C} on
the equivalence spans.

Recall that a (directed) graph in \ct{C} is a parallel pair
$\xymatrix@1{A_1\ar@<1ex>[r]^{d_1}\ar@<-1ex>[r]_{d_2}&A_0}$ of arrows
of \ct{C} and a homomorphism from the graph
$\xymatrix@1{A_1\ar@<1ex>[r]^{d_1}\ar@<-1ex>[r]_{d_2}&A_0}$ to the
graph $\xymatrix@1{B_1\ar@<1ex>[r]^{e_1}\ar@<-1ex>[r]_{e_2}&B_0}$ is a
pair $(f_1:A_1\fct B_1,f_0:A_0\fct B_0)$ of arrows in \ct{C} such that
the following diagram commutes
$$\xymatrix{
A_0\ar[d]_{f_0}&A_1\ar[d]_{f_1}\ar[l]_{d_1}\ar[r]^{d_2}&A_0\ar[d]^{f_0}\\
B_0&B_1\ar[l]^{e_1}\ar[r]_{e_2}&B_0.}$$

An \dfn{equivalence span} is a graph
$\xymatrix@1{A_1\ar@<.5ex>[r]^{d_1}\ar@<-.5ex>[r]_{d_2}&A_0}$ in
\ct{C} which is reflexive, symmetric, and endowed with a compatible
operation on pairs of consecutive arcs,
\ie there are arrows $r:A_0\fct A_1$, $s:A_1\fct A_1$, and
$t:\pb{A_1}{A_0}{A_1}\fct A_1$, where
$$\xymatrix{\pb{A_1}{A_0}{A_1}\ar[r]^{d'_2}\ar[d]_{d'_1}&A_1\ar[d]^{d_1}\\
A_1\ar[r]_{d_2}&A_o}$$
is a pullback in \ct{C}, such that the following diagrams commute:
$$\begin{array}{c}
\xymatrix@=2.5em{&A_0\ar[ld]_{\id{A_0}}\ar[d]_{r}\ar[rd]^{\id{A_0}}\\
A_0&A_1\ar[l]^{d_1}\ar[r]_{d_2}&A_0}\quad
\xymatrix@=2.5em{&A_1\ar[ld]_{d_2}\ar[d]_{s}\ar[rd]^{d_1}\\
A_0&A_1\ar[l]^{d_1}\ar[r]_{d_2}&A_0}\\
\xymatrix@=2.2em{&&
\makebox[1em][c]{\pb{A_1}{A_0}{A_1}}
\ar[ld]_(.6){d'_1}\ar[dd]_{t}\ar[rd]^(.6){d'_2}\\
&A_1\ar[ld]_{d_1}&&A_1\ar[rd]^{d_2}\\
 A_0&&A_1\ar[ll]^{d_1}\ar[rr]_{d_2}&&A_0.}
\end{array}$$

The quotient category $\ct{C}\ex$ is obtained by identifying
homomorphisms $(f_1,f_0)$ and $(g_1,g_0)$ from 
$\xymatrix@1{A_1\ar@<.5ex>[r]^{d_1}\ar@<-.5ex>[r]_{d_2}&A_0}$ to 
$\xymatrix@1{B_1\ar@<.5ex>[r]^{e_1}\ar@<-.5ex>[r]_{e_2}&B_0}$ if there
is an arrow $h:A_0\fct B_1$ such that
$$\xymatrix@=2.5em{&A_0\ar[ld]_{f_0}\ar[d]_{h}\ar[rd]^{g_0}\\
B_0&B_1\ar[l]^{e_1}\ar[r]_{e_2}&B_0}$$
---nothing is asked of the other component.

The following proposition makes the similarity explicit. 
\begin{prop}\label{cata}
The category \ct{Equ} is equivalent to the full subcategory \ct{A} of
$(\ct{Top}_0)\ex$ on those equivalence spans
$\xymatrix@1{A_1\ar@<.5ex>[r]^{d_1}\ar@<-.5ex>[r]_{d_2}&A_0}$ of
topological spaces and continuous maps such that the pair
$<d_1,d_2>:A_1\fct A_0\times A_0$ is a subspace inclusion.
\end{prop}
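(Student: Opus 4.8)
The plan is to exhibit a functor $\Phi\colon\ct{Equ}\fct\ct{A}$ and to show it is an equivalence by verifying essential surjectivity and full faithfulness. On objects I would send an equilogical space $\eql{E}=(\pt{\eql{E}},\open{\eql{E}},\sim_{\eql{E}})$ to the equivalence span whose $A_0$ is the \tz-space $(\pt{\eql{E}},\open{\eql{E}})$, whose $A_1$ is the relation $\sim_{\eql{E}}$ equipped with the subspace topology inherited from $A_0\times A_0$, and whose $d_1,d_2$ are the two projections. By construction $<d_1,d_2>\colon A_1\fct A_0\times A_0$ is a subspace inclusion, so the span lands in \ct{A}; the reflexivity, symmetry and transitivity of $\sim_{\eql{E}}$ supply the structure maps $r,s,t$, and each is continuous precisely because it is a map into a subspace of a finite product of copies of $A_0$, so its continuity is inherited from the evident continuous map into the ambient product.

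Next I would define $\Phi$ on morphisms. Given $[f]\colon\eql{E}\fct\eql{F}$ with relation-preserving continuous representative $f$, I set $f_0=f$ and let $f_1$ be induced by $f\times f$: since $f$ preserves the relations the corestriction into the subspace $B_1$ exists set-theoretically, and it is automatically continuous because $B_1$ carries the subspace topology. This produces a graph homomorphism $(f_1,f_0)$, hence a morphism of $(\ct{Top}_0)\ex$. To check that $\Phi$ is well defined on equivalence classes, suppose $f$ is equivalent to $g$, \ie $f(x)\sim_{\eql{F}}g(x)$ for every $x$; then $x\mapsto(f(x),g(x))$ is a map $h\colon A_0\fct B_1$, again continuous into $B_1$ by the subspace property, and $h$ is exactly the witness that identifies $(f_1,f_0)$ with $(g_1,g_0)$ in $(\ct{Top}_0)\ex$. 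Functoriality is then immediate.

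For essential surjectivity I would start from an arbitrary object of \ct{A}, an equivalence span with $<d_1,d_2>\colon A_1\fct A_0\times A_0$ a subspace inclusion. The set-theoretic image $R=\{(d_1a,d_2a):a\in A_1\}\subseteq A_0\times A_0$ is an honest equivalence relation on the underlying set of $A_0$, with reflexivity, symmetry and transitivity read off from $r$, $s$ and $t$ respectively. Forming the equilogical space given by the space $A_0$ together with the relation $R$ and applying $\Phi$ recovers the span up to isomorphism: the defining hypothesis says precisely that $<d_1,d_2>$ is a homeomorphism of $A_1$ onto $R$ with its subspace topology, and this homeomorphism is compatible with the two projections, hence is an isomorphism of equivalence spans.

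Full faithfulness is where the subspace condition does the real work, and this is the step I expect to be the crux. The claim is that the map $\ct{Equ}(\eql{E},\eql{F})\fct\ct{A}(\Phi\eql{E},\Phi\eql{F})$ is a bijection. For fullness, any graph homomorphism $(f_1,f_0)$ between objects of \ct{A} has its first component $f_1$ forced by $f_0$, because $<e_1,e_2>$ is monic; moreover the mere existence of $f_1$ forces $f_0$ to preserve the relations, since $x\sim_{\eql{E}}x'$ means $(x,x')=(d_1a,d_2a)$ for some $a\in A_1$, whence $(f_0x,f_0x')=(e_1f_1a,e_2f_1a)\in B_1$, \ie $f_0x\sim_{\eql{F}}f_0x'$. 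Thus every \ct{A}-morphism is the $\Phi$-image of a morphism of \ct{Equ}. For faithfulness, two relation-preserving maps $f,g$ induce identified homomorphisms exactly when there is $h\colon A_0\fct B_1$ with $e_1h=f$ and $e_2h=g$; evaluating at a point $x$ gives $(f(x),g(x))\in B_1$, that is $f(x)\sim_{\eql{F}}g(x)$ for all $x$, which is the defining equivalence in \ct{Equ}. The one thing to keep an eye on throughout is that continuity is never an obstruction: every map we must produce lands in a subspace of a product of the $A_0$'s or $B_0$'s, so continuity into the subspace follows for free from continuity into the ambient product, and it is exactly this feature—guaranteed by the subspace-inclusion condition defining \ct{A}—that makes the two categories coincide.
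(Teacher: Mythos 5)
Your proposal is correct and follows essentially the same route as the paper's proof: you construct the functor in the direction $\ct{Equ}\fct\ct{A}$ whereas the paper constructs its inverse $F\colon\ct{A}\fct\ct{Equ}$, but the object and morphism correspondences, and the key observations (the first component of a homomorphism is forced by joint monicity, continuity into the subspace comes for free, and the identification of homomorphisms matches the equivalence of representatives) are the same. No substantive difference.
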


\begin{proof}
Consider an equivalence span
$A=\xymatrix@1{A_1\ar@<.5ex>[r]^{d_1}\ar@<-.5ex>[r]_{d_2}&A_0}$ of
topological spaces and continuous maps such that the pair
$<d_1,d_2>:A_1\fct A_0\times A_0$ is a subspace inclusion.
Note that the functions $r$, $s$ and $t$ requested by the definition
of equivalence span are unique, and determine that the subset
$|A_1|$ of pairs of points of $|A_0|$ is an equivalence
relation. Write $F(A)$ for the equilogical space which consists of the
topological space $A_0$ and the equivalence relation $|A_1|$.

For a homomorphism $(f_1,f_0)$ between two such equivalence spans, the
component $f_1$ is uniquely determined by the other data as the
restriction of the pair $<f_0,f_0>$, and ensures that $f_0$ is a
representative of a map of equilogical spaces. Moreover, in the
quotient category $(\ct{Top}_0)\ex$, the homomorphism $(f_1,f_0)$ is
identified with $(g_1,g_0)$ precisely when
$<f(x),g(x)>$ is in $A_1$ for all points $x$ in $A_0$.

Thus the assignment $F([f_1,f_0])=[f_0]$ is well defined, and
determines a functor from \ct{A} to \ct{Equ} which is full and
faithful. 

To see that $F$ is also bijective on objects,
suppose $\eql{E}=(\pt{\eql{E}},\open{\eql{E}},\sim_{\eql{E}})$
is an equilogical space. Consider the subspace topology
$\sigma_{\eql{E}}$ on 
$\sim_{\eql{E}}\subseteq \pt{\eql{E}}\times\pt{\eql{E}}$ 
and the graph of topological spaces 
$$\xymatrix@1{
(\sim_{\eql{E}},\sigma_{\eql{E}})\ar@<.5ex>[r]^{\pi_1}\ar@<-.5ex>[r]_{\pi_2}&
(\pt{\eql{E}},\open{\eql{E}})}$$
induced by the two projections. It is easy to check that it is an
equivalence span and, by construction, the pair 
$<\pi_1,\pi_2>:(\sim_{\eql{E}},\sigma_{\eql{E}})\fct
(\pt{\eql{E}},\open{\eql{E}})\times(\pt{\eql{E}},\open{\eql{E}})$ 
is a subspace inclusion. It is obvious that the functor $F$ takes that
equivalence span of \ct{A} to the equilogical space $\eql{E}$.
\end{proof}

In the following, we shall refer to an equivalence span 
$\xymatrix@1{A_1\ar@<.5ex>[r]^{d_1}\ar@<-.5ex>[r]_{d_2}&A_0}$ of
topological spaces and continuous maps such that the pair
$<d_1,d_2>:A_1\fct A_0$ is a subspace inclusion as a \dfn{subspatial}
equivalence span.

\section{The effective topos}\label{two}

The effective topos \ct{Eff} was introduced in
\cite{HylandJ:trit,HylandJ:efft}. It was shown in
\cite{RosoliniG:colcat} that \ct{Eff} is (equivalent to) the exact
completion of the category \ct{PAsm} of partitioned assemblies, see
\cite{CarboniA:catarp}.

A \dfn{partitioned assembly} is a function $\xi:X\fct\N$; a 
\dfn{map $\xymatrix@1@=2em{{\pad\xi X}\ar[r]^f&\ {\pas\zeta Y}}$
of partitioned assemblies} is a function $f:X\fct Y$ such that there
is a partial recursive function $\phi:\N\pfct\N$ such that the
following diagram commutes
$$\xymatrix{X\ar[r]^f\ar[d]_{\xi}&Y\ar[d]^{\zeta}\\
\N\ar@{-^{`}}[r]_\phi&\N.}$$

In order to make sure that the exact completion introduced in
section~\ref{one} can be applied to the category \ct{PAsm} we recall
how finite limits can be obtained in that category.

The product of two partitioned assemblies is obtained by adopting some
particular recursive enconding $\cp{n,m}$ of pairs of numbers; the
product partitioned assembly of \pas\xi X and \pas\zeta Y is
the function 
$$(x,y)\mapsto\cp{\xi(x),\zeta(y)}:X\times Y\fct\N$$
with obvious projections.

The equalizer of $\xymatrix@=2em{{\pad\xi X}
\ar@<.5ex>[r]^{f}\ar@<-.5ex>[r]_{g}&\ {\pas\zeta Y}}$
is the
partitioned assembly \pasl{\xi\rst_E}E where 
$E\colon=\{x\in\N\mid f(x)=g(x)\}$ with the obvious inclusion into
\pas\xi X.

The next result will be useful in the following.

\begin{lem}\label{lemmon}
Every equivalence span 
$$\xymatrix@=3em{{\pad{\alpha_1}{A_1}}
\ar@<.5ex>[r]^{d_1}\ar@<-.5ex>[r]_{d_2}&\ {\pas{\alpha_0}{A_0}}}$$
in $\ct{PAsm}\ex$ is isomorphic to one of the form
$$\xymatrix@=3em{{\pad{\epsilon}{E}}
\ar@<.5ex>[r]^{e_1}\ar@<-.5ex>[r]_{e_2}&\ {\pas{\alpha_0}{A_0}}}$$
such
that the triple $<e_1,e_2,\epsilon>$ is monic.
\end{lem}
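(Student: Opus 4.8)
The plan is to realise the required span concretely as the set of endpoint-pairs of $A_1$ decorated with their realisers. Recall first that a morphism of partitioned assemblies is monic exactly when its underlying function is injective. Now set $E\colon=\{<d_1(a),d_2(a),\alpha_1(a)>\mid a\in A_1\}\subseteq A_0\times A_0\times\N$, where $\N$ denotes the partitioned assembly $\id\N$, give $E$ the structure $\epsilon$ defined by the third coordinate, let $e_1,e_2:E\fct A_0$ be the first two coordinates, and let $q:A_1\fct E$ be the evident surjection $a\mapsto<d_1(a),d_2(a),\alpha_1(a)>$. By construction $<e_1,e_2,\epsilon>$ is the inclusion of $E$ into $A_0\times A_0\times\N$, hence injective and so monic. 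Moreover $q$ is tracked by the identity, since $\epsilon\circ q=\alpha_1$, and $e_1,e_2$ are morphisms because $\alpha_0\circ d_i=\phi_i\circ\alpha_1$ for the partial recursive $\phi_i$ tracking $d_i$, so that $\alpha_0\circ e_i=\phi_i\circ\epsilon$.

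Next I would verify that $\xymatrix@1{E\ar@<.5ex>[r]^{e_1}\ar@<-.5ex>[r]_{e_2}&A_0}$ is again an equivalence span. The decisive observation is that $q$ admits a set-theoretic section $g:E\fct A_1$—for each triple in $E$ pick an arc producing it—and that \emph{any} such section is automatically a morphism of partitioned assemblies: the chosen arc $a=g(e)$ satisfies $\alpha_1(a)=\epsilon(e)$ by the very definition of $E$, so $g$ is tracked by the identity. One then transports the structure of $A$, setting $r_E\colon=q\circ r$, $s_E\colon=q\circ s\circ g$, and $t_E\colon=q\circ t\circ(g\times_{A_0}g)$, the last being well defined because $g$ carries the matching condition of $\pb{E}{A_0}{E}$ to that of $\pb{A_1}{A_0}{A_1}$. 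Each of these is a composite of morphisms, and the reflexivity, symmetry and transitivity diagrams commute because the corresponding diagrams hold for $A$ together with the identities $e_i\circ q=d_i$ and $d_i\circ g=e_i$.

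Finally I would exhibit the isomorphism in $\ct{PAsm}\ex$. Both $q$ and $g$ are graph homomorphisms lying over $\id{A_0}$, so $[\id{A_0}]$ represents a morphism $A\fct E$ with witness $q$ and a morphism $E\fct A$ with witness $g$; each composite has underlying arrow $\id{A_0}\circ\id{A_0}=\id{A_0}$ and therefore equals the respective identity morphism, so the two spans are isomorphic. The one genuinely delicate point throughout is trackability, and it is precisely what forces the triple, rather than the pair, to be monic. One cannot in general collapse $E$ onto the honest equivalence relation $\{<d_1(a),d_2(a)>\mid a\in A_1\}\subseteq A_0\times A_0$ and require only that $<e_1,e_2>$ be monic, since an arbitrary choice of a single realiser per related pair need not depend recursively on the realisers of the inputs, so the transported transitivity map could fail to be tracked. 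Keeping $\alpha_1$ in the third coordinate is exactly what makes the section $g$, and hence every structure map, recursive.
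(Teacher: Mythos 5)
Your proof is correct and follows essentially the same route as the paper's: you take $E$ to be the image of $\langle d_1,d_2,\alpha_1\rangle$ in $A_0\times A_0\times\mathbb{N}$ with the third projection as its structure map, observe that the quotient map and any set-theoretic section of it are tracked by the identity, and appeal to the axiom of choice to produce the section and hence the isomorphism. You merely spell out in more detail the transport of the equivalence-span structure and the verification of the isomorphism, which the paper leaves as ``easy to see.''
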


\begin{proof}
Consider an arbitrary equivalence span
$$\xymatrix@=3em{{\pad{\alpha_1}{A_1}}
\ar@<.5ex>[r]^{d_1}\ar@<-.5ex>[r]_{d_2}&\ {\pas{\alpha_0}{A_0}}}$$
in $\ct{PAsm}\ex$. So there are two partial recursive functions
$\phi_1$ and $\phi_2$ such that the following diagram commutes
$$\xymatrix{A_0\ar[d]_{\alpha_0}&
A_1\ar[r]^{d_2}\ar[l]_{d_1}\ar[d]_{\alpha_1}&A_0\ar[d]^{\alpha_0}\\
\N&\N\ar@{-^{`}}[r]_{\phi_2}\ar@{-_{`}}[l]^{\phi_1}&\N.}$$
Take $E$ to be the
image of the function
$<d_1,d_2,\alpha_1>:A_1\fct A_0\times A_0\times\N$, let $f:A_1\fct E$
be the factoring surjection, and let
$\epsilon\colon=\pi_3\rst_E:E\fct\N$. Let 
$\xymatrix@1@=2em{e_1, e_2:{\pad{\epsilon}E}\ar[r]&
\ {\pas{\alpha_0}{A_0}}}$ be the first and second projection
respectively. It is easy to see that it is an equivalence span.

Clearly $f$ gives rise to a map of partitioned assemplies
$\xymatrix@1@=2em{{\pad{\alpha_1}{A_1}}\ar[r]^f&
\ {\pas\epsilon E}}$ since there is a commutative diagram
$$\xymatrix{A_1\ar[d]_{\alpha_1}\ar[r]^f&E\ar[d]^{\epsilon}\\
\N\ar@{-^{`}}[r]_{\id{\N}}&\N.}$$
Moreover any section $s:E\fct A_1$ of $f$ (as a function of sets) 
is a map of partitioned assemblies 
$\xymatrix@1@=2em{{\pad\epsilon E}\ar[r]^s&
\ {\pas{\alpha_1}{A_1}}}$ and a section of 
$\xymatrix@1@=2em{{\pad{\alpha_1}{A_1}}\ar[r]^f&
\ {\pas\epsilon E}}$ in \ct{PAsm}.
Thus an appeal to the axiom of choice yields the conclusion.
\end{proof}

\begin{rem}
Note that the axiom of choice was used in a crucial way in
\ref{lemmon} to determine an equivalence span of the required form and
the requested isomorphism, but the proof that 
$$\xymatrix@=3em{{\pad{\epsilon}{E}}
\ar@<.5ex>[r]^{e_1}\ar@<-.5ex>[r]_{e_2}&\ {\pas{\alpha_0}{A_0}}}$$ is
an equivalence span does not require the use of the axiom of
choice.
\end{rem}

We conclude this brief review of the effective topos recalling a
diagram of functors considered by Aurelio Carboni in
\cite{CarboniA:somfcr} which shows how similar the situation is to
that of topological spaces. Write 
$\ct{PAsm}_0$ for the full subcategory of \ct{PAsm} on those
partitioned assemblies which are 1-1 (functions). This is clearly
equivalent to the category \ct{PR} whose objects are subsets of \N
and whose arrows are restriction of partial recursive functions
between those, total on the domain.
$$\def\rfr#1#2{\ar@<1.4ex>@{<-}[#1]\ar@<-1ex>@{^(->}[#1]^(#2){\bot}}
\def\wrfr#1#2{\ar@<-.3ex>@{^(->}[#1]}
\def\dfr#1#2{\ar@<-1.4ex>@{<-}[#1]\ar@<1ex>@{_(->}[#1]_(#2){\dashv}}
\xymatrix@=3em{
{\ct{PAsm}_0}\ \dfr{d}{.5}\wrfr{r}{.5}&
{\ct{PER}}_{}\dfr{d}{.5}\rfr{r}{.5}&{(\ct{PAsm}_0)\ex}\dfr{d}{.5}\\
{\ct{PAsm}_{}}\ \wrfr{r}{.5}&
{\ct{PAsm}\reg}\rfr{r}{.5}&{\ct{PAsm}\ex}.}$$
In the diagram of full subcategories of \ct{Eff}, the exact completion
$\ct{PAsm}\ex$ is itself the effective topos; $\ct{PAsm}\reg$ is the
full subcategory of \ct{Eff} on the $\neg\neg$-separated objects;
$(\ct{PAsm}_0)\ex$ is the full subcategory of \ct{Eff} on the discrete
objects---\ie subquotients of the natural number object of \ct{Eff},
see \cite{RosoliniG:disoet}---; and \ct{PER} is the intersection of
the last two, the full subcategory of \ct{Eff} on the
$\neg\neg$-separated subquotients of the natural number object of
\ct{Eff}, also known as ``partial equivalence relations on \N'', see
\cite{HylandJ:smacc}. As is shown in \cite{CarboniA:somfcr}, this
last is not the regular completion of $\ct{PR}\equiv\ct{PAsm}_0$.
A similar remark applies to \ct{Equ} and $(\ct{Top}_0)\reg$ which are
not equivalent---this corrects a hastily mistaken, happily
irrelevant statement in \cite{BucaloA:sobies}.

\section{Groupoids}\label{three}\def\r{i}\def\t{c}\def\s{s}%

Consider a category \ct{C} with pullbacks. A \dfn{groupoid \gd{G} in
\ct{C}} is a graph
$\xymatrix@1@C=1.85em{G_1\ar@<.5ex>[r]^{d_1}\ar@<-.5ex>[r]_{d_2}&G_0}$
of objects and arrows in \ct{C} together with three more arrows
$$\begin{array}{c@{\qquad}c@{\qquad}c}
\r:G_0\fct G_1&\t:\pb{G_1}{G_0}{G_1}\fct G_1&\s:G_1\fct G_1\\
\end{array}$$
where
$$\xymatrix{\pb{G_1}{G_0}{G_1}
\ar[r]^(.6){d'_2}\ar[d]_{d'_1}&G_1\ar[d]^{d_1}\\
G_1\ar[r]_{d_2}&G_o}$$
is a pullback in \ct{C}, such that 
\begin{itemize}
\item the graph
$\xymatrix@1{G_1\ar@<.5ex>[r]^{d_1}\ar@<-.5ex>[r]_{d_2}&G_0}$ with
$\r$ and $\t$ is a category object in \ct{C},
\item $\s$ is an involution which makes every arrow an isomorphism.
\end{itemize}

The notions of \dfn{functor} of groupoids in \ct{C} is
obvious as well as that of \dfn{natural transformation}. It is
straightforward to check that a functor between 
groupoids preserves the involution which makes every arrow an
isomorphism.

We have already available a large number of examples as follows from
the next property.

\begin{prop}\label{here}
Let
$\xymatrix@1{G_1\ar@<.5ex>[r]^{d_1}\ar@<-.5ex>[r]_{d_2}&G_0}$ 
be a graph in \ct{C} with
arrows $r:G_0\fct G_1$, $t:\pb{G_1}{G_0}{G_1}\fct G_1$, and 
$s:G_1\fct G_1$ such that the diagrams
$$\begin{array}{c}
\xymatrix@C=2.4em@R=2.6em{
&G_0\ar[ld]_{\id{G_0}}\ar[d]_{r}\ar[rd]^{\id{G_0}}\\
G_0&G_1\ar[l]^{d_1}\ar[r]_{d_2}&G_0}\quad
\xymatrix@C=2.4em@R=2.6em{
&G_1\ar[ld]_{d_2}\ar[d]_{s}\ar[rd]^{d_1}\\
G_0&G_1\ar[l]^{d_1}\ar[r]_{d_2}&G_0}\\
\xymatrix@=2.2em{&&\makebox[1em][c]{\pb{G_1}{G_0}{G_1}}
\ar[ld]_(.6){d'_1}\ar[dd]_{t}\ar[rd]^(.6){d'_2}\\
&G_1\ar[ld]_{d_1}&&G_1\ar[rd]^{d_2}\\
 G_0&&G_1\ar[ll]^{d_1}\ar[rr]_{d_2}&&G_0}
\end{array}
$$
commute. If the pair 
$\xymatrix@1{G_1\ar@<.5ex>[r]^{d_1}\ar@<-.5ex>[r]_{d_2}&G_0}$ is
jointly monic, then
\begin{enumerate}\def\labelenumi{{\normalfont(\roman{enumi})}}
\item
the structure given by
$$\xymatrix@1{G_2\ar[r]^t&
G_1\ar@<.5ex>[r]^{d_1}\ar@<-.5ex>[r]_{d_2}\ar@(ld,dr)[]_s&
G_0\ar@/_20pt/[l]_r}$$
is a groupoid \gd{G} in \ct{C},
\item for any groupoid \gd{H} in \ct{C},
a graph-homomorphism from the underlying graph 
$\xymatrix@1{H_1\ar@<.5ex>[r]^{e_1}\ar@<-.5ex>[r]_{e_2}&H_0}$ 
of \gd{H} to
$\xymatrix@1{G_1\ar@<.5ex>[r]^{d_1}\ar@<-.5ex>[r]_{d_2}&G_0}$ is also
a functor from \gd{H} to \gd{G},
\item for any groupoid \gd{H} in \ct{C}, let $(f_1,f_0)$ and
$(g_1,g_0)$ be functors from the groupoid \gd{H} to
the groupoid \gd{G}. Then
an arrow $a:H_0\fct G_1$ such that 
$$\xymatrix@=2.5em{&H_0\ar[ld]_{f_0}\ar[d]_{a}\ar[rd]^{g_0}\\
G_0&G_1\ar[l]^{d_1}\ar[r]_{d_2}&G_0}$$
is a natural transformation from 
$(f_1,f_0)$ to $(g_1,g_0)$.
\end{enumerate}
\end{prop}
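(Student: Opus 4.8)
The whole argument rests on reading the hypothesis as a cancellation principle: since $<d_1,d_2>:G_1\fct G_0\times G_0$ is monic, any two arrows $u,v:X\fct G_1$ out of a common object are equal as soon as $d_1\circ u=d_1\circ v$ and $d_2\circ u=d_2\circ v$. Every identity that has to be verified in the three parts is an equation whose codomain is $G_1$, so the plan is to reduce each one, by postcomposition with $d_1$ and with $d_2$, to a pair of equations with codomain $G_0$; these will always collapse to tautologies once the three given diagrams are unwound. I record for reuse the source--target equations they encode: $d_1r=d_2r=\id{G_0}$, $d_1s=d_2$, $d_2s=d_1$, $d_1t=d_1\circ d'_1$ and $d_2t=d_2\circ d'_2$.

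For part (i) I would treat the category-object axioms and the groupoid axioms uniformly. The involution is immediate, since $d_1\circ(s\circ s)=d_2\circ s=d_1$ and $d_2\circ(s\circ s)=d_1\circ s=d_2$ force $s\circ s=\id{G_1}$. For the left unit law I would check $t\circ<r\circ d_1,\id{G_1}>=\id{G_1}$ by computing $d_1\circ t\circ<r\circ d_1,\id{G_1}>=d_1\circ d'_1\circ<r\circ d_1,\id{G_1}>=d_1\circ r\circ d_1=d_1$ and, symmetrically, $d_2\circ t\circ<r\circ d_1,\id{G_1}>=d_2$; the right unit law, associativity on the triple pullback, and the two inverse laws $t\circ<\id{G_1},s>=r\circ d_1$, $t\circ<s,\id{G_1}>=r\circ d_2$ all succumb to the same two-line computation. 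In each case the only input beyond the recorded source--target equations is the defining commutativity of the pullback square.

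Parts (ii) and (iii) use exactly the same device. A graph-homomorphism $(f_1,f_0):\gd{H}\fct\gd{G}$ already satisfies $d_1\circ f_1=f_0\circ e_1$ and $d_2\circ f_1=f_0\circ e_2$; to see it is a functor I must verify preservation of units, $f_1\circ r_{\gd{H}}=r_{\gd{G}}\circ f_0$, and of composition, $f_1\circ t_{\gd{H}}=t_{\gd{G}}\circ(f_1\times_{f_0}f_1)$, and both, being equations into $G_1$, reduce on applying $d_1$ and $d_2$ to instances of the homomorphism equations and the relations of part (i). For (iii) the naturality requirement is the single equation $t_{\gd{G}}\circ<a\circ e_1,g_1>=t_{\gd{G}}\circ<f_1,a\circ e_2>$ between arrows $H_1\fct G_1$; postcomposition with $d_1$ yields $f_0\circ e_1=f_0\circ e_1$ and postcomposition with $d_2$ yields $g_0\circ e_2=g_0\circ e_2$, using $d_1\circ a=f_0$, $d_2\circ a=g_0$ and the homomorphism equations for $f_1$ and $g_1$.

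The computations themselves are mechanical, so I expect the only delicate point to be the well-definedness of the paired arrows feeding into $t$. Each expression such as $<a\circ e_1,g_1>$ or $<r\circ d_1,\id{G_1}>$ factors through $\pb{G_1}{G_0}{G_1}$ only after one checks the matching condition $d_2\circ(-)=d_1\circ(-)$ on its two components, and for associativity the analogous check must be carried out coherently over the triple pullback of $G_1$ over $G_0$. These matching conditions are themselves instances of the recorded source--target equations, so they cause no real trouble, but they are the step where a convention error would most easily creep in.
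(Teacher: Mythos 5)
Your proposal is correct and is exactly the routine verification the paper has in mind when it dismisses the proof as ``Straightforward'': every axiom is an equation with codomain $G_1$, so joint monicity of $\langle d_1,d_2\rangle$ reduces it to the recorded source--target identities. The matching conditions for the pairings into $\pb{G_1}{G_0}{G_1}$ that you flag do all follow from those same identities, so there is nothing further to add.
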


\begin{proof}
Straightforward.
\end{proof}

\begin{cor}\label{cora}
Every subspatial equivalence span is a groupoid in $\ct{Top}_0$. Every
representative of an arrow in \ct{A} is a functor between the
groupoids.
\end{cor}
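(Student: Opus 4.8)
The plan is to deduce both assertions directly from Proposition~\ref{here}, whose single hypothesis---joint monicity of the pair of face maps---is exactly what the subspatial condition supplies. First I would record that observation: if $\xymatrix@1{A_1\ar@<.5ex>[r]^{d_1}\ar@<-.5ex>[r]_{d_2}&A_0}$ is a subspatial equivalence span, then $<d_1,d_2>:A_1\fct A_0\times A_0$ is a subspace inclusion, hence a monomorphism, so the pair $d_1,d_2$ is jointly monic.

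For the first statement I would match the equivalence-span data against the hypotheses of Proposition~\ref{here}. By definition an equivalence span is equipped with arrows $r:A_0\fct A_1$, $s:A_1\fct A_1$ and $t:\pb{A_1}{A_0}{A_1}\fct A_1$ satisfying precisely the reflexivity, symmetry and composition diagrams required there, and the pullback $\pb{A_1}{A_0}{A_1}$ together with the ambient products exist because $\ct{Top}_0$ has finite limits. With joint monicity in hand, part~(i) of Proposition~\ref{here} applies unchanged and exhibits the span as (the underlying graph of) a groupoid in $\ct{Top}_0$.

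For the second statement I would use Proposition~\ref{cata} and its proof: an arrow of \ct{A} is represented by a graph-homomorphism $(f_1,f_0)$ between the underlying graphs of two subspatial equivalence spans, with $f_1$ the restriction of $<f_0,f_0>$. By the first statement the source is a groupoid \gd{H} and the target is a groupoid \gd{G} whose face maps are jointly monic, so part~(ii) of Proposition~\ref{here} immediately upgrades the graph-homomorphism to a functor $\gd{H}\fct\gd{G}$. I expect no real obstacle here: once the subspace inclusion is read as joint monicity, the whole statement is bookkeeping, the actual work having been discharged in Proposition~\ref{here}, where joint monicity collapses every remaining groupoid axiom into an identity of maps into $G_1$ that is forced by its composites with $d_1$ and $d_2$. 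The only points meriting a moment's attention are that $\ct{Top}_0$ really does provide the requisite pullback and products, and that the representatives of \ct{A}-arrows are homomorphisms in the strict sense demanded by part~(ii).
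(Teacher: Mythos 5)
Your proposal is correct and matches the paper's intent exactly: the corollary is stated without proof precisely because it follows immediately from Proposition~\ref{here} once one observes that a subspace inclusion $<d_1,d_2>$ is monic, hence the face maps are jointly monic, with parts~(i) and~(ii) then giving the two assertions. The identification of \ct{A}-arrow representatives with graph homomorphisms, which you invoke from the proof of Proposition~\ref{cata}, is likewise the step the paper relies on.
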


Consider the interval-like groupoid 
$$\gd{I}\colon=\kern1.5em\xymatrix{
\bullet\ar@(ld,lu)[]\ar@/^/[r]&
\bullet\ar@(rd,ru)[]\ar@/^/[l]}$$
with the discrete topology.
A natural transformation as in \ref{here}(iii) is the same as
a functor $\gd{H}\times\gd{I}\fct\gd{G}$. Thanks to \ref{cata},
we may rephrase Corollary \ref{cora} as follows.

\begin{thm}
The category 
\ct{Equ} of equilogical spaces is the homotopical quotient of the
category \ct{A} of topological groupoids.
\end{thm}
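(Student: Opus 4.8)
The plan is to read the statement off the results already in place, the only genuine work being the verification that $\gd{I}$ carries an interval-like structure in the appropriate sense. First I would pin down what ``the category $\ct{A}$ of topological groupoids'' means here: by Corollary~\ref{cora} together with Proposition~\ref{here}(ii), it is the category whose objects are the subspatial equivalence spans and whose arrows are the functors between them---equivalently, the \tz-spaces equipped with an equivalence relation together with the continuous maps that preserve it, with no quotient yet taken on arrows. I would then check that this category has finite limits, computed componentwise as for internal groupoids in $\ct{Top}_0$: the terminal object is the one-object groupoid $\gd{T}$, a product $\gd{H}\times\gd{G}$ has object space $H_0\times G_0$ and arrow space $H_1\times G_1$, and since a product of two subspace inclusions is again a subspace inclusion the product stays subspatial; equalizers are cut out as subspaces in each component and remain subspatial. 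In particular $\gd{I}$, the total equivalence relation on two points with the discrete topology, is an object of $\ct{A}$.

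Next I would install the interval-like structure on $\gd{I}$. The two global points $0,1\colon\gd{T}\fct\gd{I}$ select the two objects; the pushout $\gd{I}+_{\gd{T}}\gd{I}$ is the total equivalence relation on three points (again discrete, hence in $\ct{A}$), obtained by gluing the two copies along their shared endpoint. The reversal $\iota\colon\gd{I}\fct\gd{I}$ interchanges the two points and the subdivision $\gamma\colon\gd{I}\fct\gd{I}+_{\gd{T}}\gd{I}$ sends the unique arc of $\gd{I}$ to the composite arc running from the first to the last of the three points; together with $!\colon\gd{I}\fct\gd{T}$ all these functors are forced on objects, and on arrows as well, because every hom-set of a total equivalence relation is a singleton. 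I would then verify the triangle for $\iota$ and the diagram for $\gamma$ that make $(0,1,!,\iota,\gamma)$ an equivalence co-span---each amounts to an equality of functors whose target has singleton hom-sets, and therefore holds as soon as the two functors agree on objects---and check that the pushout is stable under products, for which it suffices that products and the pushout are computed on the underlying discrete finite point-sets, where the distributive law is immediate. This makes $\gd{I}$ interval-like and, by the condition recalled in the Introduction, guarantees that the induced relation on parallel arrows is an equivalence relation. Elementary as it is, this verification is the one place I expect to have to be careful.

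With the interval in hand the quotient is forced. By the remark preceding the statement, a homotopy $h\colon\gd{H}\times\gd{I}\fct\gd{G}$ between two functors $(f_1,f_0)$ and $(g_1,g_0)$ is exactly a natural transformation between them, and by Proposition~\ref{here}(iii) such a natural transformation is precisely an arrow $a\colon H_0\fct G_1$ with $d_1a=f_0$ and $d_2a=g_0$. This is word for word the datum that identifies $(f_1,f_0)$ with $(g_1,g_0)$ in the exact completion $(\ct{Top}_0)\ex$. Consequently the homotopical quotient of $\ct{A}$ determined by $\gd{I}$ has the same objects as the subcategory $\ct{A}$ of $(\ct{Top}_0)\ex$ and identifies exactly the same parallel arrows, so the two categories coincide; an appeal to Proposition~\ref{cata}, which identifies that subcategory with $\ct{Equ}$, then delivers the theorem.
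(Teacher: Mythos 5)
Your proposal is correct and follows essentially the same route as the paper, which proves the theorem simply by combining Proposition~\ref{cata}, Corollary~\ref{cora}, and Proposition~\ref{here}(iii) with the observation that a natural transformation is the same as a functor $\gd{H}\times\gd{I}\fct\gd{G}$. You merely make explicit the verifications the paper leaves implicit (finite limits in $\ct{A}$, the equivalence co-span structure on $\gd{I}$, and the stability of the pushout under products), all of which check out.
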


\section{2-groupoids}\label{four}

A similar case can be made for the effective topos. We prove in the
following that it is the homotopical quotient of a category of
higher groupoids in \ct{PAsm}.

Consider a category \ct{C} with pullbacks. A \dfn{2-groupoid \gd{G} in
\ct{C}} is a 2-graph
$$\xymatrix@R=5em@C=10em{G_2\ar@<.5ex>[r]^{d_{21}}\ar@<-.5ex>[r]_{d_{22}}
\ar@<.6ex>@/_12pt/[rd]|(.3){\kern6em d_{11}d_{21}=d_{11}d_{22}}
\ar@<-.6ex>@/_12pt/[rd]|(.6){d_{12}d_{21}=d_{12}d_{22}\kern5em}&
G_1\ar@<.5ex>[d]^(.6){d_{11}}\ar@<-.5ex>[d]_(.6){d_{12}}\\
&G_0}$$ 
of objects and arrows in \ct{C} together with arrows
$$\begin{array}{c@{\qquad}c@{\qquad}c}
\r_1:G_0\fct G_1&\t_1:\pb{G_1}{G_0}{G_1}\fct G_1&\s_1:G_1\fct G_1\\
\r_2:G_1\fct G_2&\t_2:\pb{G_2}{G_1}{G_2}\fct G_2&\s_2:G_2\fct G_2\\
\multicolumn3c{\t_2':\pb{G_2}{G_0}{G_2}\fct G_2
\qquad q:G_1\fct G_2}
\end{array}$$
where
$$\xymatrix{\pb{G_1}{G_0}{G_1}
\ar[r]^(.6){}\ar[d]_{}&G_1\ar[d]_{d_{11}}\\
G_1\ar[r]_{d_{12}}&G_o}\quad
\xymatrix{\pb{G_2}{G_1}{G_2}
\ar[r]^(.6){}\ar[d]_{}&G_2\ar[d]_{d_{21}}\\
G_2\ar[r]_{d_{22}}&G_1}\quad
\xymatrix{\pb{G_2}{G_0}{G_2}
\ar[r]^(.6){}\ar[d]_{}&G_2\ar[d]_{d_{11}d_{21}}\\
G_2\ar[r]_{d_{12}d_{22}}&G_0}$$
are pullbacks in \ct{C}, such that 
\begin{itemize}
\item the 2-graph
$\xymatrix@1{G_2\ar@<.5ex>[r]^{d_{21}}\ar@<-.5ex>[r]_{d_{22}}&
G_1\ar@<.5ex>[r]^{d_{11}}\ar@<-.5ex>[r]_{d_{12}}&G_0}$ with
$\r_1$, $\t_1$, $\r_2$, $\t_2$, $\t_2'$ is a 2-category object in \ct{C},
\item $\s_1$ is an involution which makes every 1-arrow an
equivalence via the pair of arrows given by $q$,
\item $\s_2$ is an involution which makes every 2-arrow an iso.
\end{itemize}

The notions of \dfn{2-functor} of 2-groupoids in \ct{C} is
obvious as well as that of \dfn{2-transformation}.

Consider the 2-category $\grd(\ct{PAsm})$ of 2-groupoids in
\ct{PAsm} with 2-functors and 2-transformations. Clearly, the
underlying graph of a 2-groupoid \gd{G} of \ct{PAsm} is an equivalence
span in \ct{PAsm}, thus an object of $\ct{Eff}$. This extends directly
to a functor $U:\grd(\ct{PAsm})\fnt\ct{Eff}$.

\begin{thm}\label{thes}
The functor $U:\grd(\ct{PAsm})\fnt\ct{Eff}$ is essentially surjective.
\end{thm}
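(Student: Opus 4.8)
Since $\ct{Eff}\equiv\ct{PAsm}\ex$, every object of $\ct{Eff}$ is presented by an equivalence span in \ct{PAsm}, and by Lemma \ref{lemmon} I may take its representative in the normalised shape
$$\xymatrix@=3em{{\pad{\epsilon}{E}}
\ar@<.5ex>[r]^{e_1}\ar@<-.5ex>[r]_{e_2}&\ {\pas{\alpha_0}{A_0}}}$$
with $<e_1,e_2,\epsilon>$ monic. The plan is to produce a $2$-groupoid \gd{G} in \ct{PAsm} whose underlying equivalence span is exactly this one; then $U(\gd{G})$ is the chosen object and essential surjectivity follows.

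First I would set $G_0\colon=A_0$ and $G_1\colon=E$, reusing the reflexivity, symmetry and composition of the equivalence span as the $1$-cell operations $\r_1$, $\s_1$ and $\t_1$. For the second level I would take $G_2\colon=\pb{E}{A_0\times A_0}{E}$, the object of pairs of $1$-cells sharing source and target (the pullback of $<e_1,e_2>$ against itself), which exists by the finite-limit constructions recalled in section \ref{two}; its two projections serve as $d_{21}$ and $d_{22}$, and the compatibilities $d_{11}d_{21}=d_{11}d_{22}$ and $d_{12}d_{21}=d_{12}d_{22}$ hold by construction. Because there is then exactly one $2$-cell from a $1$-cell to any parallel one --- namely the pair itself --- all remaining operations are forced: $\r_2$ is the diagonal, $\s_2$ swaps the two entries, $\t_2$ and $\t_2'$ send a vertically (resp.\ horizontally) composable pair of $2$-cells to the evident composite pair, and $q$ assigns to a $1$-cell the unique $2$-cell witnessing that composing it with its $\s_1$-inverse gives an identity.

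The conceptual heart is the observation already exploited in Proposition \ref{here}: since at most one $2$-cell joins a given ordered parallel pair of $1$-cells, any two parallel $2$-cells coincide, so every equation demanded of a $2$-groupoid holds automatically --- the associativity and unit comparisons for $\t_1$, the interchange law between $\t_2$ and $\t_2'$, the involution laws for $\s_2$, and the triangle identities that make $q$ exhibit $\s_1$ as an inverse equivalence. No coherence therefore has to be checked by hand.

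What does require honest work, and where I expect the only real difficulty to lie, is the verification that each of the eight structure maps is an arrow of \ct{PAsm}, \ie is tracked by a partial recursive function. The object $G_2$ inherits a partitioned-assembly structure from the product, sending a pair $(x,x')$ to the code $\cp{\epsilon(x),\epsilon(x')}$, and trackers for $\r_2$, $\s_2$, $\t_2$, $\t_2'$ and $q$ are then assembled from the trackers of the span operations $\r_1$, $\s_1$, $\t_1$ on $E$ together with the recursive pairing $\cp{n,m}$ and its projections; the normalisation by Lemma \ref{lemmon} is what keeps this bookkeeping canonical. Once the three pullbacks appearing in the definition are identified with the \ct{PAsm}-pullbacks so that the domains and codomains match, the underlying equivalence span of \gd{G} is $E\rightrightarrows A_0$, whence $U(\gd{G})$ is the object we began with and $U$ is essentially surjective.
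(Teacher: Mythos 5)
Your overall strategy---normalise the given object via Lemma~\ref{lemmon} and then exhibit a 2-groupoid in \ct{PAsm} whose image under $U$ is that object---is the same as the paper's, but your choice of 1-cells is genuinely different, and it is where the argument breaks. You take $G_1\colon=E$ and reuse the span's transitivity map as the composition $\t_1$. However, the first level of a 2-groupoid in the paper's sense must be a \emph{category object} in \ct{PAsm}, so $\t_1$ has to be strictly associative and unital, and nothing in the definition of an equivalence span guarantees this: the normalisation of Lemma~\ref{lemmon} only makes the triple $<e_1,e_2,\epsilon>$ monic, \emph{not} the pair $<e_1,e_2>$, so $E$ may contain several elements over the same pair of points distinguished by their realizers, and the realizer of a threefold composite computed in the two bracketings need not agree. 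Your observation that ``at most one 2-cell joins a parallel pair of 1-cells, hence all equations hold'' disposes only of equations \emph{between 2-cells}; associativity and unitality of $\t_1$ are equations between 1-cells, and the unique-2-cell trick would repair them only if one worked with weak (bicategorical) 2-groupoids, which is not what $\grd(\ct{PAsm})$ is. Contrast this with Proposition~\ref{here} in the topological case, where $<d_1,d_2>$ itself is monic and all such laws are automatic; that is precisely the feature that fails here and forces the passage to dimension~2.

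This is exactly why the paper does not set $G_1=E$: it takes the \emph{free dagger category} on the graph, whose 1-cells are zigzag paths equipped with an explicit recursive encoding $\alpha^\wedge$, so that composition (concatenation of paths) is strictly associative and unital by construction; the total relation at level~2 then identifies any two parallel zigzags, and $U$ applied to the result is a span merely \emph{isomorphic} in \ct{Eff} to the one you started with---which is all that essential surjectivity requires, so nothing is lost by not hitting the object on the nose. (The freeness of the 1-level is also what the subsequent theorem on numeric 2-groupoids relies on.) Your remarks on trackability and on the automatic commutation of 2-diagrams are correct and do appear in the paper's proof, but the level-1 construction must be replaced by the free one (or the notion of 2-groupoid weakened, which would change the statement) for the proof to go through.
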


\begin{proof}
Consider an object in \ct{Eff}, by \ref{lemmon} we can assume without
loss of generality that it is an equivalence span
$\xymatrix@=3em{{\pad{\alpha_1}{A_1}}
\ar@<.5ex>[r]^{a_1}\ar@<-.5ex>[r]_{a_2}&\ {\pas{\alpha_0}{A_0}}}$
in $\ct{PAsm}$ such that the triple $<a_1,a_2,\alpha>$ is monic.
Take the free dagger category on that graph in
\ct{PAsm}---by a \dfn{dagger category} we mean a category
together with a involutive contravariant functor which is the identity
on objects. It consists of \pas{\alpha_0}{A_0} as objects of 
objects. The object of 1-arrows is \pas{\alpha^\wedge}{A^\wedge} where 
$A^\wedge$ consists of the zigzag paths in the graph
$\xymatrix@=3em{A_1\ar@<.5ex>[r]^{a_1}\ar@<-.5ex>[r]_{a_2}&A_0}$. By a
zigzag path in the graph we mean a list which is either of the form
$<x>$ where $x\in A_0$ or
$$
<x_0,e_1,i_1,x_1,e_2,i_2,x_2,\ldots,
x_{n},e_{n+1},i_{n+1},x_{n+1}>,
$$
where
\begin{itemize}
\item $x_\ell\in A_0$ for $0\leq\ell\leq n+1$,
\item $e_\ell\in A_1$ for $1\leq\ell\leq n+1$,
\item $i_\ell\in\{0,1\}$ for $1\leq\ell\leq n+1$,
\item for $0\leq\ell\leq n$, if $i_\ell=0$, then
$<x_\ell,x_{\ell+1},e_{\ell+1}>\in A_1$,
\item for $0\leq\ell\leq n$, if $i_\ell=1$, then
$<x_{\ell+1},x_\ell,e_{\ell+1}>\in A_1$.
\end{itemize}
Intuitively, if one considers a triple $<x,x',e>\in A_1$ as an edge
$e$ from the source $x$ to the target $x'$ in the graph
$\xymatrix@=3em{A_1\ar@<.5ex>[r]^{a_1}\ar@<-.5ex>[r]_{a_2}&A_0}$, then
the zigzag
$$<x_0,e_1,i_1,x_1,e_2,i_2,x_2,\ldots,x_{n},e_{n+1},i_{n+1},x_{n+1}>$$
is a mixed-directional path of edges from the vertex $x_0$ to
the vertex $x_{n+1}$ where each edge $e_\ell$ between $x_\ell$ and
$x_{\ell+1}$ is marked with either $0$ or $1$: if the mark is $0$,
$e_\ell$ goes from $x_\ell$ to $x_{\ell+1}$ in the original graph; if
the mark is $1$, $e_\ell$ goes from $x_{\ell+1}$ to $x_\ell$.
The function $\alpha^\wedge$ is defined by mapping a zigzag to the
encoding of the list of its numerical components:
$$\begin{array}{l@{{}\colon={}}l}
\alpha^\wedge(<x>)&\cp{0,\alpha_0(x)}\\
\multicolumn2l{\alpha^\wedge(<x_0,e_1,i_1,\ldots,
x_{n},e_{n+1},i_{n+1},x_{n+1}>)\colon={}}
\\
&\cp{n+1,\cp{\alpha^\wedge(<x_0,e_1,i_1,\ldots,x_{n}),
\cp{\cp{e_{n+1},i_{n+1}},a_0(x_{n+1})}}}.
\end{array}$$
The structure of dagger category in \ct{PAsm} is obvious, changing
each $i_\ell$ with $\sigma(i_\ell)$ where $\sigma:\{0,1\}\fct\{0,1\}$
swaps $0$ with $1$.
The object of 2-arrows \pas{\alpha^-}{A^\wwedge} is formed by taking
the total relation on each 1-homset, where
$A^\wwedge\colon=\pb{A^\wedge}{A_0}{A^\wedge}$. 
Explicitly, $A^\wwedge$ consists of all pairs of zigzags
$$<<x_0,e,\ldots,x_{n}>,<x_0,e',\ldots,x_{n}>>$$
between each two given vertices $x$ and $x'$; clearly all 2-diagrams
commute as there is at most one 2-arrow from an 1-arrow to another. In
this way, the dagger functor becomes the involution which makes every
1-arrow an equivalence.\\
It is easy to see that that gives a 2-groupoid on the given span
in \ct{PAsm} and that the functor $U$ takes it to a span which is
isomorphic to 
$\xymatrix@=3em{{\pad{\alpha_1}{A_1}}
\ar@<.5ex>[r]^{e_1}\ar@<-.5ex>[r]_{e_2}&\ {\pas{\alpha_0}{A_0}}}$.
\end{proof}

We shall refer to a 2-groupoid like that produced in the proof of
\ref{thes} as a \dfn{numeric} 2-groupoid as all edges are denoted
by numbers. More precisely, it is a 2-groupoid \gd{G} in \ct{PAsm}
such that its underlying category in \ct{PAsm}
$$\xymatrix@R=4em@C=4em{
G_1\ar@<.5ex>[r]^{d_{11}}\ar@<-.5ex>[r]_{d_{12}}&G_0}$$ 
is a free dagger category and \gd{G} embeds, fully at level 2, into
the 2-groupoid 
$$\xymatrix@R=4em@C=4em{
G_0\times G_0\times\N\times\N
\ar@<.5ex>[r]^(.53){\pi_{123}}\ar@<-.5ex>[r]_(.53){\pi_{124}}&
G_0\times G_0\times\N
\ar@<.5ex>[r]^(.62){\pi_1}\ar@<-.5ex>[r]_(.62){\pi_2}&G_0}$$
where $\pi_{123}$ and $\pi_{124}$ are the projections deleting the
fourth and third component, respectively.

\begin{thm}
The functor $U:\grd(\ct{PAsm})\fnt\ct{Eff}$ restricts to a homotopical
quotient of the full subcategory \ct{N} on the numeric 2-groupoids.
\end{thm}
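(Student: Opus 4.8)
The plan is to run the argument of Section~\ref{three} one dimension higher. First I would fix the interval-like 2-groupoid $\gd{J}$ whose underlying groupoid is the $\gd{I}$ of Section~\ref{three} and whose 2-arrows form the total relation on each 1-homset, so that $\gd{J}$ is itself numeric; as in Section~\ref{three} it carries the equivalence co-span structure and its pushout $\gd{J}+_{T}\gd{J}$ is stable under products, so that it is interval-like in the sense of the introduction. The point, exactly parallel to the remark preceding the previous theorem, is that a 2-transformation between two 2-functors $F,G:\gd{H}\fct\gd{G}$ is the same thing as a 2-functor $\gd{H}\times\gd{J}\fct\gd{G}$ restricting to $F$ and $G$ at the two endpoints; thus the homotopy relation determined by $\gd{J}$ identifies precisely those pairs of 2-functors that are connected by a 2-transformation.

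The heart of the argument is to show that, when the target $\gd{G}$ is numeric, a 2-transformation $F\Rightarrow G$ is determined by, and freely reconstructed from, its object-component alone, namely an arrow $h:H_0\fct G_1$ with $d_{11}h=f_0$ and $d_{12}h=g_0$, where $U(F)=(f_1,f_0)$ and $U(G)=(g_1,g_0)$. Indeed, since $\gd{G}$ embeds fully at level~2 into the numeric 2-groupoid on $G_0\times G_0\times\N$, between any two parallel 1-arrows of $\gd{G}$ there is exactly one 2-arrow; hence the 1-cell components of a 2-transformation exist and are unique, and every coherence equation holds automatically. Conversely any such $h$ assembles into a unique 2-transformation, the required naturality 2-cells being forced by the same count. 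But an arrow $h:H_0\fct G_1$ with $d_{11}h=f_0$, $d_{12}h=g_0$ is exactly the datum that identifies the two span-homomorphisms $U(F)$ and $U(G)$ in the quotient $\ct{PAsm}\ex=\ct{Eff}$ described in Section~\ref{one}. Putting the two steps together, two parallel 2-functors in \ct{N} are homotopic with respect to $\gd{J}$ if and only if $U$ sends them to the same arrow of \ct{Eff}.

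It then remains to see that the functor induced on the homotopical quotient is an equivalence, for which I would check essential surjectivity and fullness. Essential surjectivity is already in hand: the 2-groupoid produced in the proof of \ref{thes} is numeric, so the restriction of $U$ to \ct{N} still reaches every object of \ct{Eff} up to isomorphism. For fullness I would lift an arbitrary arrow of \ct{Eff}: using the isomorphism of \ref{thes} to present it by a span-homomorphism $(f_1,f_0)$ from the base graph of the free dagger category underlying \gd{H} to $U\gd{G}$, I define the 2-functor $\gd{H}\fct\gd{G}$ to be $f_0$ on objects, to follow $f_1$ on the generating edges, to extend uniquely along the free dagger structure to all zigzag 1-arrows, and to act on 2-arrows through the forced unique 2-cells of the numeric target; this yields a 2-functor that $U$ carries to the chosen homomorphism. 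Faithfulness of the quotient functor is exactly the equivalence established in the previous paragraph.

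The main obstacle is the middle step. Two-dimensional coherence could in principle obstruct both the passage from a 2-transformation to its object-component and the reverse reconstruction; but the numeric hypothesis---exactly one 2-arrow between any two parallel 1-arrows---collapses all the 2-cell data and all coherence equations to conditions that are automatically satisfied, so that the whole comparison reduces to the one-dimensional identification already used for the exact completion in Section~\ref{one}.
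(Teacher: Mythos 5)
Your proposal is correct and follows essentially the same route as the paper: fullness of $U$ on \ct{N} from freeness of the source and the commutativity of all 2-diagrams in the target, essential surjectivity from \ref{thes}, and the identification of the datum $h\colon H_0\fct G_1$ witnessing $U(F)=U(G)$ in $\ct{PAsm}\ex$ with a homotopy through an interval 2-groupoid. The one small divergence is your interval: the paper takes $\gd{I}$ to be the \emph{free dagger category} on a single edge between two nodes (with all 2-cells), precisely so that $\gd{I}$ is itself numeric and hence an object of \ct{N}, whereas your $\gd{J}$, built on the four-arrow groupoid of Section~\ref{three}, is not a free dagger category and so lies outside \ct{N}; replacing it by the free version fixes this without changing anything else in your argument.
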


\begin{proof}
Suppose that \gd{G} and \gd{H} are numeric groupoids.
Since \gd{G} is a free dagger category and all 2-diagrams commute in
\gd{H}, it is easy to see that every arrow 
$[f]:U(\gd{G})\fct U(\gd{H})$ in \ct{Eff} has a representative which is
a 2-functor $F:\gd{G}\fct\gd{H}$.\\
To see that the functor $U:\grd(\ct{PAsm})\fnt\ct{Eff}$
restricted to \ct{N} is indeed a
homotopical quotient, consider the interval-like groupoid \gd{I}: it is
the free dagger category on the graph in \ct{PAsm} on $T+T$ with two
(disjoint) nodes and a single edge $u$ connecting one with the other,
with all possible 2-arrows. It is clearly a numeric 2-groupoid.
Consider now two functors $F,F':\gd{G}\fnt\gd{H}$ such that
$U(F)=U(F')$; in other words, there is a map $k:G_0\fct H_1$ in
\ct{PAsm} such that
$$
F_0=d^{\gd{H}}_{11}\circ k\qquad\mbox{ and }\qquad
F'_0=d^{\gd{H}}_{12}\circ k.
$$
Note that the 1-category underlying the 2-groupoid
$\gd{G}\times\gd{I}$ is a retract of a free dagger category. Using $k$
to act on the generating arrow of \gd{I} as follows
$$\xymatrix@C=5em@R=2em{
<x,0>\ar[d]^{<<x>,u>}="f"\ar@{|->}[r]
&F_0(x)\ar[d]_{k(x)}="g"\\
<x,1>\ar@{|->}[r]&F'_0(x)\\
{}\ar@{|->}"f";"g"}
$$
by freeness it is easy to 
obtain a functor $K:\gd{G}\times\gd{I}\fnt\gd{H}$ which gives a
homotopy from $F$ to $F'$.
\end{proof}
\bibliographystyle{alpha}
\bibliography{RosoliniG,procs,biblio}
\end{document}